\newenvironment{lenumerate}[2][]
{\begin{enumerate}[label=(#2\arabic*),leftmargin=0.2in,itemindent=0.15in,#1]}
{\end{enumerate}}
\setlist*[enumerate,1]{label={\itshape\arabic*)}}
\newcommand{\paragraphswithstop}{%
\let\copyparagraph\paragraph%
\renewcommand\paragraph[1]{\copyparagraph{##1.}}%
}
\def\namedlabel#1#2{\begingroup
  #2%
  \def\@currentlabel{#2}%
  \phantomsection\label{#1}\endgroup
}
\newsavebox{\boxifnotempty}
\newcommand{\displayifnotempty}[3]{\sbox\boxifnotempty{#2}\setbox0=\hbox{\usebox{\boxifnotempty}\unskip}%
\ifdim\wd0=0pt
\else
 #1\usebox{\boxifnotempty}#3%
\fi%
}
\newcommand{\ifempty}[2]{\setbox0=\hbox{#1\unskip}%
\ifdim\wd0=0pt%
 #2%
\fi%
}
\newcommand{\ifnotempty}[2]{\setbox0=\hbox{#1\unskip}%
\ifdim\wd0>0pt%
 #2%
\fi%
}
\newcommand*\newstoreddef[1]{
  \BeforeClosingMainAux{%
    \immediate\write\@auxout{%
      \string\restoredef{#1}{\csname #1\endcsname}%
    }%
  }%
}
\newcommand*{\restoredef}[2]{
  \expandafter\gdef\csname stored@#1\endcsname{#2}%
}
\newcommand*{\storeddef}[1]{
  \@ifundefined{stored@#1}{0}{\csname stored@#1\endcsname}%
}
\newcommand{\real}[1]{\mathbb{R}^{#1}{}}
\newcommand{\bmat}[1]{\begin{bmatrix}#1\end{bmatrix}}
\newcommand{\transpose}{^\mathrm{T}}
\DeclarePairedDelimiter{\norm}{\lVert}{\rVert}
\DeclareMathOperator{\expect}{\mathbb{E}}
\providecommand{\cC}{\mathcal{C}}
\providecommand{\cP}{\mathcal{P}}
\providecommand{\cU}{\mathcal{U}}
\providecommand{\cX}{\mathcal{X}}
  \newcommand{\newcolorlabel}[2]{%
  \expandafter\newcommand\csname #1\endcsname[1]{%
    \tikz[baseline]{\node[text=white,fill=#2,anchor=base,text height=1.3ex,text depth=0.1ex,font=\sffamily\bfseries]{##1}}}%
}
\newcommand{\newcommenter}[2]{%
  \expandafter\newcommand\csname #1\endcsname[1]{%
    \fcolorbox{#2}{#2}{\color{white}\textsf{\textbf{#1}}}
    {\color{#2}##1}}%
  \expandafter\newcommand\csname at#1\endcsname{%
    \fcolorbox{#2}{#2}{\color{white}\textsf{\textbf{@#1}}}
    {\color{#2}}}%
  \expandafter\newcommand\csname #1hl\endcsname[2]{%
    \colorbox{#2}{\color{white}\textsf{\textbf{#1}}}\sethlcolor{Azure2}\hl{##2}~%
    \expandafter\ifx\csname commentarrow\endcsname\relax$\leftarrow$\else \commentarrow[#2]\fi~%
    {\color{#2}##1}}%
  \expandafter\newcommand\csname #1st\endcsname[2]{%
    \colorbox{#2}{\color{white}\textsf{\textbf{#1}}}\sout{##2}~%
    \expandafter\ifx\csname commentarrow\endcsname\relax$\leftarrow$\else \commentarrow[#2]\fi~%
    {\color{#2}##1}}%
}
\tikzset{
  dim above/.style={to path={\pgfextra{
        \pgfinterruptpath
        \draw[>=latex,|->|] let
        \p1=($(\tikztostart)!1.5em!90:(\tikztotarget)$),
        \p2=($(\tikztotarget)!1.5em!-90:(\tikztostart)$)
        in(\p1) -- (\p2) node[pos=.5,sloped,above]{#1};
        \endpgfinterruptpath
      }
    }
  },
  dim double above/.style={to path={\pgfextra{
        \pgfinterruptpath
        \draw[>=latex,|->|] let
        \p1=($(\tikztostart)!3em!90:(\tikztotarget)$),
        \p2=($(\tikztotarget)!3em!-90:(\tikztostart)$)
        in(\p1) -- (\p2) node[pos=.5,sloped,above]{#1};
        \endpgfinterruptpath
      }
    }
  },
  dim below/.style={to path={\pgfextra{
        \pgfinterruptpath
        \draw[>=latex,|->|] let 
        \p1=($(\tikztostart)!-1em!-90:(\tikztotarget)$),
        \p2=($(\tikztotarget)!-1em!90:(\tikztostart)$)
        in (\p1) -- (\p2) node[pos=.5,sloped,below]{#1};
        \endpgfinterruptpath
      }
    }
  },
}
\tikzset{
    right angle quadrant/.code={
        \pgfmathsetmacro\quadranta{{1,1,-1,-1}[#1-1]}     
        \pgfmathsetmacro\quadrantb{{1,-1,-1,1}[#1-1]}},
    right angle quadrant=1, 
    right angle length/.code={\def\rightanglelength{#1}},   
    right angle length=2ex, 
    right angle symbol/.style n args={3}{
        insert path={
            let \p0 = ($(#1)!(#3)!(#2)$) in     
                let \p1 = ($(\p0)!\quadranta*\rightanglelength!(#3)$), 
                \p2 = ($(\p0)!\quadrantb*\rightanglelength!(#2)$) in 
                let \p3 = ($(\p1)+(\p2)-(\p0)$) in  
            (\p1) -- (\p3) -- (\p2)
        }
    }
}
\newcommand{\pgfextractangle}[3]{%
    \pgfmathanglebetweenpoints{\pgfpointanchor{#2}{center}}
                              {\pgfpointanchor{#3}{center}}
    \global\let#1\pgfmathresult  
}
\newcommand{\commentarrow}[1][Azure4]{\tikz[baseline=-3pt]{\node[shape border uses incircle, fill=#1,rotate=180,single arrow, inner sep=1pt, minimum size=6pt, single arrow head extend=2pt]{};}}
\tikzset{ax/.style={-latex,line width=2pt}}
\tikzset{camera/.style={fill=Sienna1,fill opacity=0.5},%
image plane/.style={draw=RoyalBlue3,line width=2pt}}
\title{\LARGE \bf
Chance Constraint Robust Control with Control Barrier Functions}
\author{Chenfei Wang$^1$, Mahroo Bahreinian$^2$  and Roberto Tron$^3$
\thanks{$^{1}$Chenfei Wang is with Department of Mechanical Engineering,
        Boston, MA, 02215 USA. Email:
        {\tt\small wang1029@bu.edu}}%
\thanks{$^{2}$Mahroo Bahreinian is with Division of Systems Engineering at Boston
University, Boston, MA, 02215 USA. Email:
        {\tt\small mahroobh@bu.edu}}%
\thanks{$^{3}$Roberto Tron is with Faculty of Department of Mechanical Engineering
at Boston University, Boston, MA, 02215 USA. Email:
        {\tt\small tron@bu.edu}}%
}
\begin{document}

 \maketitle
\thispagestyle{empty}
\pagestyle{empty}

\begin{abstract}

  In this paper, we propose a novel approach to synthesize linear feedback controllers for navigating in polygonal environments using noisy measurements and a convex cell decomposition. Our method is based on formulating chance constraints for the convergence and collision avoidance condition. In particular, the stability and safety guarantees come from chance Control Barrier Function (CBF) constraints and chance Control Lyapunov Function (CLF) constraints, respectively. We use convex over-approximations to get upper bounds of the constraints, leading to a convex robust quadratic program for finding the controller. We apply and provide simulation results for equilibrium control and path control. The result shows that the controller is robust with the noise input.

\end{abstract}

\section{INTRODUCTION}
Safety is a critical requirement in practical applications of control systems. An instance of this problem can be described as finding a controller such that the trajectory of the closed-loop system converges to a desired set (e.g., an equilibrium point) while respecting the safety limits (e.g., the obstacle's boundaries). For example, a driverless car might need to negotiate turns in an intersection without hitting the sidewalk, an industrial manipulator might need to move pieces without striking other equipment, and a robotic vacuum cleaner might need to move in an apartment without touching the furniture. Most of existing methods focus on designing nominal trajectories and controllers in the environment using deterministic measurements, or even full state information of the agent (e.g., the precise position). However, in real situations, measurements are always corrupted by noise. This means that a purely deterministic approach might fail to translate theoretical guarantees into practice. In contrast, our paper presents a method to design linear state-feedback controllers while considering the noisy environment directly together the safety requirements. We define a chance-constrained version of Control Barrier Function constraints (\emph{chance CBF}) and provide sufficient conditions for almost-sure forward invariance of a set. This allow us to set up a framework for automatically designing controllers with noisy input. Although the chance CBF constraints are non-convex, we show how they can be over-approximated to transform the problem into a quadratic problem that can be easily handled by modern solvers. We combine the CBF constraints with Control Lyapunov Function (CLF) constraints to apply our method to two stability objectives: equilibrium control, and path control.

\subsection{Background and previous work}
Barrier function methods have seen a recent rise in popularity, due to their natural flexibility given by their relationship with Lyapunov functions \cite{i1}, and their ability to guarantee safety \cite{i2}. The natural extension of a barrier function to a system with control inputs
is a Control Barrier Function, first proposed in \cite{i3}. The seminal works \cite{c1} and \cite{c1b} summarize their development, and describe the construction of the quadratic programs (QPs) that are at the core of these methods. Their application has been extended to systems with high relative degree \cite{c2}, \cite{c3}, and to the case where stability and safety constraints cannot be satisfied at the same time \cite{m3}. In particular, in this paper we base our treatment of CBF on the work of \cite{c2}.
Additional developments have considered the challenges brought by unknown system disturbances. The work in \cite{m1} considers the case of bounded, state-independent measurements, resulting in a robust formulation of the CBF quadratic program; that approach does not explicitly consider probabilistic chance constraints. The work in \cite{m2} takes a similar approach, where, however, the bounds on the disturbances are obtained from the estimated covariance of the Unscented Kalman Filter (UKF). A major point common to all the aforementioned works is that the implementation of the controller requires solving a QP at every time instant (as opposed to directly synthesizing a output feedback controller as in this paper). 

Chance constrained optimization problems were introduced in \cite{i4}. Even though the it is nonconvex in general, \cite{i5} shows that the feasible set of a chance constraint is convex. \cite{i6}, \cite{c7} and \cite{c8} introduce some methods about convex approximation, including Chebyshev bound which is used in this paper.

The present paper builds upon the work of \cite{c6}, which also consider the problem of learning linear controllers in convex cells. However, that work does not explicitly consider noisy measurements, and the stabilization to single equilibrium points requires the introduction of additional cells in the decomposition (while in this paper we can directly use a quadratic Lyapunov function).

\subsection{Paper contributions}
The first contribution of this paper is the chance CBF theory. We introduce the chance CBF and prove that system satisfying CBF constraint has set invariance in the sense of probability. Equivalently, the probability of system going outside this set is zero. The chance CBF allow us to design the controller satisfying the safety requirement with the noise input. Using chance CBF constraint to set up the controller designing would lead to a nonconvex optimization problem. 

The second contribution of this paper is the method of solving this nonconvex optimization problem. We use the convex approximation to get a upper bound of the probability at each point. Using this upper bound to restrict the probability gives a feasible solution for the original nonconvex problem. To get the control feedback matrix, each point inside the zone needs to satisfying the chance constraint. This is a robust optimization problem. We use the max-min inequality to get the upper bound for this problem in the given set. Also, this process leads to a maximization of a convex function. We use the property of convex to get once more the upper bound to restrict this value. This gives a quadratic constraint quadratic programming (QCQP) in the end.  

The third contribution of this paper is that we validate our approach through numerical simulation. The results show that the controller works very well in the weak noise environment. And the controller also can handle the strong noise to some extend.

\subsection{Organization}
The remainder of the paper is organized as follows. We first review background information and establish basic definitions and results in Section \ref{section2}. We then give the definition of chance CBF and chance CLF constraints, and give sufficient conditions for almost sure forward invariance of a constraint set in Section \ref{section3}. In Section \ref{section4} we discuss our proposed methods for finding controller via quadratically constrained quadratic programming. Finally, in Section \ref{section5} we use simulations to study the performance of our method under challenging conditions. 
 
\section{NOTATION AND PRELIMINARIES} \label{section2}
In this section, we formally define our problem, review relevant background knowledge and definitions, and introduce basic results regarding chance constraints and robust optimization.

\subsection{System Dynamics}
Consider the LTI model,
\begin{equation} \label{LTI}
 \dot{x} =A x+B u
\end{equation}

where $A \in \mathbb{R}^{n_{x} \times n_{x}}$, and $B \in \mathbb{R}^{n_{x} \times n_{u}}$ are matrices defining the linear dynamics, $x\in \mathcal{X}\subset\real{n_x}$ is the state, $u\in \cU\subset\real{n_u}$ is the system input, and we use $\cU$ to model actuator constraints. We assume that the sets $\cX$ and $\cU$ are polyhedra described by sets of linear inequalities $A_xx\leq b_x$ and $A_ux\leq b_u$, respectively, with $A_x\in\real{n_h\times n_x}$, $b_x\in\real{n_h}$, $A_u\in\real{n_{hu}\times n_x}$, $b_u\in\real{n_{hu}}$.

\begin{remark}
For more concise expression, the following paper would use $f$ and $g$ to represent $Ax$ and $B$ respectively.
\end{remark}

\subsection{Problem statement}\label{sec:problem_statement}
We assume that the agent (robot) can measure its own state $x$ corrupted by an additive noise term $\theta\in\real{n_x}$ which is a stochastic variable with mean $\expect[\theta]=0$ and covariance $\expect[\theta\theta\transpose]=\Sigma(x)$, where we assume that $\Sigma(x)\in\real{n_x\times n_x}$ can be a piecewise linear function of $x$.
We then formulate the following:
\begin{problem}\label{def:problem}
  Design a matrix $K\in\real{n_x\times n_x}$ for the linear state feedback controller
  \begin{equation}
u = K(\tilde{x}  + \theta)\label{eq:controller}
\end{equation}
that achieves either of the following objectives (stability) while keeping the state of the system in a polytope $\cX$:
  \begin{lenumerate}{P}
  \item\label{it:equilibrium} \emph{Equilibrium control:} Drive the state toward an equilibrium point $x_e\in\cX$.
  \item\label{it:path} \emph{Path control:} Drive the state toward an exit face of the polytope $\cX$ defined by $z\transpose x+b_x$.
  \end{lenumerate}
\end{problem}
\begin{remark} Although in this paper we focus on individual convex domains, the stabilization objectives \ref{it:equilibrium} and \ref{it:path} can be combined with a convex cell decomposition and a discrete planner to achieve navigation in general polygonal environments (see \cite{c6} and Section~\ref{section5}).
\end{remark}
\subsection{Lie derivatives and relative degree}
Given a sufficiently smooth function $h:\real{n}\to\real{}$ and a vector field $f:\real{n}\to\real{n}$, we use the notation $L_fh=\nabla_x h\transpose f$ to denote the Lie derivative of $h$ along $f$, where $\nabla h$ represents the gradient field of $h$. Higher order derivatives are recursively defined as $L_f^rh=L_fL_f^{r-1}h$, with $L_f^0h=h$.

System \eqref{LTI} is said to have relative degree $R$ with respect to a function $h$ if $L_gL_{Ax}^{i}h=0$ for all $0\leq i<r$ and $L_BL_{Ax}^rh\neq 0$.

\subsection{Control Barrier Function and Safety Constraints}\label{sec:CBFintro}
Consider the system \eqref{LTI} and a constraint set of the form 
\begin{equation} \label{BF}
C=\left\{x \in \mathbb{R}^{n_x} \mid h(x) \geq 0\right\},
\end{equation}
where $h: \mathbb{R}^{n_x} \rightarrow\real{}$ is a sufficiently smooth function (in our application, it will be an affine function). 

Let $r$ be the relative degree of \eqref{LTI} with respect to \eqref{BF}.

Following \cite{c2}, define a series of functions $\psi_{0},\psi_{1},\dots, \psi_{r}$ of the form
\begin{align}\label{eq:psi}
\psi_{0}(x)&=b(x) \\
\psi_{i}(x)&=\dot{\psi}_{i-1}(x)+\alpha_{i}\left(\psi_{i-1}(x)\right) 
\end{align}
and a series of sets $C_{0},C_{1},\dots, C_{r}$
\begin{equation}
C_{i}=\left\{x \in \mathbb{R}^{n_x}: \psi_{i}(x) \geq 0\right\} 
\end{equation}
with $0<i\leq r$.

\begin{definition}[HOCBF \cite{c2}]\label{def:HOCBF}
The function $h$ in \eqref{BF} is a \emph{High Order Control Barrier Function} (HOCBF) of relative degree $r$ for system \eqref{LTI} if there exist differentiable class $\mathcal{K}$ functions $\alpha_1, \alpha_2, \dots, \alpha_r$, such that
\begin{equation} \label{HOCBF}
\psi_{r}({x}) \geq 0
\end{equation}
for all ${x} \in \hat{C} = \bigcap_{i=1}^r C_{r}$.
\end{definition}

\begin{property}
 Given an HOCBF $h$, if $x(t_0)\in \hat{C}$, then any any Lipschitz
continuous controller $u(t)$ satisfying (\ref{HOCBF}) renders the set $\hat{C}$ forward invariant for system (\ref{LTI}).
\end{property}
See \cite{c2} for a proof. Note that for $r=1$ we recover the original definition of a Zeroing Control Barrier Function (ZCBF) \cite{c1}. Moreover, for the particular case where the functions $\{\alpha_i\}_{i=1}^r$ are linear scalar functions (with positive coefficients), Definition~\ref{def:HOCBF} simplifies to the definition of ECBF from \cite{c3}:
\begin{definition}[ECBF, \cite{c3}]: The function $h$ in \eqref{BF} is an \emph{Exponential Control Barrier Function} (ECBF) of relative degree $r$ for system \eqref{LTI} if there exist coefficients $a_h\in\real{1\times r}$ satisfying the stability conditions in \cite{c3} and such that
\begin{equation} \label{ECBF}
L_{f}^{r}h(x)+L_{g} L_{f}^{r-1} h(x) u+\alpha_h \xi_{h}(x) \ge 0
\end{equation}
for all $x\in C$, where 
\begin{equation}
\xi_{h}(x)=\bmat{
h(x) \\
\dot{h}(x) \\
\ddot{h}(x) \\
\vdots \\
h^{\left(r-1\right)}(x)
}=\bmat{
h(x) \\
L_{f} h(x) \\
L_{f}^{2} h(x) \\
\vdots \\
L_{f}^{r-1} h(x)
}
\end{equation}
\end{definition}

In our paper we make use of the simplified ECBF formulation. Moreover, we will use multiple affine barrier functions $h_i(x)=A_{xi}x+b_{xi}$, where $A_{hi}$ and $b_{xi}$ are the individual rows and elements of $A_x$ and $b_x$, respectively. Together, these functions delimit the set $\cX$. Similarly, we will use barrier functions $h_j(x)=A_{uj}x+b_{uj}$ for the set $\cU$.

\subsection{Control Lyapunov Function and Stability Constraints}
Results similar to those of the section above can be obtained for analyzing stability instead of safety. In particular, we will use the notion of  Exponential Control Lyapunov Function (ECLF) from \cite{c6}, which extends similar notions from \cite{c4,c5} to systems with high relative degree.

\begin{definition}[ECLF, \cite{c6}]
  A sufficiently smooth function $V(x): \mathcal{X} \rightarrow \mathbb{R}$ with $V(x) \ge 0$ with relative degree $r \ge 0$ for the dynamics \eqref{LTI} is an Exponential Control Lyapunov Function (ECLF) if there exists $\beta_v \in \mathbb{R}^{1\times r}$ satisfying the stability requirements given in \cite{c6} and control inputs $u$ such that 

\begin{equation} \label{CLF}
L_{f}^{r} V(x)+L_{g} L_{f}^{r-1} V(x) u+\beta_{V} \xi_{V}(x) \leq 0
\end{equation}
for all $x\in \mathcal{X}$, where
\begin{equation}
\xi_{V}=\bmat{
V(x) \\
\dot{V}(x) \\
\vdots \\
V^{(r-1)}(x)
}=\bmat{
V(x) \\
{L}_{f} V(x) \\
\vdots \\
{L}_{f}^{r-1} V(x)
}
\end{equation}
\end{definition}

\begin{property}
  Given an ECLF $V(x)$, if the set $\mathcal{X}$ is a forward-invariant set, then $\lim _{t \rightarrow \infty} V(x(t))=0$ with exponential convergence. Furthermore, if $\alpha_{1}(\norm{x}) \leq V(x) \leq \alpha_{2}(\norm{x})$ for all $x \in \mathcal{X}$, and class-$\mathcal{K}$ functions $\alpha_1$ and $\alpha_2$, then $\lim _{t \rightarrow \infty} x=0$ with exponential convergence.
\end{property}
See \cite{c6} for the proof.

\subsection{Convex Approximation for Chance Constraints}\label{sec:chance-relaxation}
A generic chance-constrained optimization problem has the following form
\begin{subequations}
\begin{align}\label{eq:generic-chance-constrained}
\min_{x\in \cX} &\quad \varphi_0(x)\\
s.t. \; &\cP(\varphi_i(x) \geq 0) \leq \eta_i(x), \quad i=1,\dots,n,\label{eq:chance-constraint}
\end{align}
\end{subequations}
where $\varphi_i$ denote random variables that depend on the state $x$, and $\eta_i(x)\in [0,1]$ are user-supplied chance constraints which, in general, are functions of the state $x$. We stress the fact that $x$ is deterministic, while the effect of noise is captured by $\varphi_i$.
In general, the constraint \eqref{eq:chance-constraint} is non-convex, but we can replace the inequality with a sufficient condition to obtain a convex relaxation. For this, we need the following proposition (inspired by \cite{c8}):
\begin{proposition}
  For any non-negative, convex, and non-decreasing function $\phi(u)$ satisfying $\phi(0)=1$ and for any $t>0$, the constraint \eqref{eq:chance-constraint} is implied by the condition
  \begin{equation} \label{chance3}
    \expect[\phi(\frac{\varphi_i(x)}{t})] \le \eta_i
  \end{equation}
\end{proposition}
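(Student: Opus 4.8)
The plan is to use a Markov-type (Chebyshev-style) inequality argument. The key observation is that the event $\{\varphi_i(x)\geq 0\}$ can be related to the event $\{\phi(\varphi_i(x)/t)\geq \phi(0)\}=\{\phi(\varphi_i(x)/t)\geq 1\}$ by exploiting that $\phi$ is non-decreasing: since $t>0$, if $\varphi_i(x)\geq 0$ then $\varphi_i(x)/t\geq 0$, and monotonicity of $\phi$ gives $\phi(\varphi_i(x)/t)\geq\phi(0)=1$. Hence $\{\varphi_i(x)\geq 0\}\subseteq\{\phi(\varphi_i(x)/t)\geq 1\}$, so $\cP(\varphi_i(x)\geq 0)\leq\cP(\phi(\varphi_i(x)/t)\geq 1)$.

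Next I would apply Markov's inequality to the non-negative random variable $\phi(\varphi_i(x)/t)$: for any threshold $a>0$, $\cP(\phi(\varphi_i(x)/t)\geq a)\leq \expect[\phi(\varphi_i(x)/t)]/a$. Taking $a=1$ yields
\begin{equation}
\cP(\varphi_i(x)\geq 0)\leq \cP\!\left(\phi\!\left(\tfrac{\varphi_i(x)}{t}\right)\geq 1\right)\leq \expect\!\left[\phi\!\left(\tfrac{\varphi_i(x)}{t}\right)\right].
\end{equation}
Chaining this with the hypothesis \eqref{chance3} that $\expect[\phi(\varphi_i(x)/t)]\leq\eta_i$ immediately gives $\cP(\varphi_i(x)\geq 0)\leq\eta_i$, which is exactly the chance constraint \eqref{eq:chance-constraint}. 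This completes the argument.

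I do not expect a genuine obstacle here: the statement is essentially a packaging of the generalized Markov inequality, and the two ingredients needed — non-negativity of $\phi$ (so Markov applies) and non-decreasing-ness together with $\phi(0)=1$ (so the event inclusion holds) — are exactly the hypotheses. The only minor point worth stating carefully is the direction of the event inclusion and the role of $t>0$ in preserving the sign of the argument of $\phi$; the convexity of $\phi$ is not actually used in this particular implication (it is presumably invoked elsewhere, e.g. when choosing $\phi(u)=(u+1)^2$ or an exponential and then optimizing over $t$ to tighten the bound, or to keep the resulting relaxed constraint convex in the design variables). One could remark on this, but the proof itself is a two-line chain of inequalities.
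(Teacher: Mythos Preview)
Your proof is correct and is essentially the same argument as the paper's: the paper phrases the key step as the pointwise domination $I_+(u)\le\phi(u)$ and then takes expectations, while you split this into an event inclusion followed by Markov's inequality with threshold $1$, but these are the same inequality viewed from two angles. Your remark that convexity plays no role in this implication is also accurate.
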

\begin{proof}
  Introducing the indicator function
  \begin{equation}
    I_+(u)=\begin{cases}
      1  & {u \geq 0},\\
      0  & {u < 0},
    \end{cases}
  \end{equation}
  we can express the probability in the chance constraints as an expectation: $\cP(\varphi_i(x) \geq 0) = \mathbf{E}[I_+(\varphi_i(x))]$.
   Together with the fact that $\phi(u)\ge I_+(u)$, we have
\begin{equation} \label{chance2}
  \cP(\varphi_i(x)\ge 0) =\cP(\frac{\varphi_i(x)}{t}\ge 0) 
\le \expect[\phi(\frac{\varphi_i(x)}{t})].
\end{equation}
The claim follows.
\end{proof}

Specifically, let $\varphi_i = b\transpose(x+\theta)+c$, where $a\in \mathbb{R}$, $b\in \mathbb{R}^{n}$, and $\theta \in\real{n}$ is a random variable with $\expect[\theta] = 0$, $\expect[\theta\theta\transpose]=\Sigma(x)$, where $\Sigma(x)$ is a \emph{linear}  function of $x$. By choosing $\phi(u) = (u+1)^2$ (Chebyshev bound), we have:

\begin{multline}
\cP(f \ge 0) \le \eta \Longleftarrow \expect[(f + t)^2]\le t^2\eta\\
\iff \expect[f]^2 + 2t\expect[f]+t^2(1-\eta) \le 0\\
\iff b\transpose(xx\transpose+\Sigma(x))b+2cb\transpose x+c^2\\ +2t(b\transpose x+c)+t^2(1-\eta) \le 0.
\end{multline}
The final expression is equivalent to 
\begin{equation} \label{eq:chebychev}
(b\transpose x+c)^2+b\transpose\Sigma(x) b\transpose +2t(b\transpose x+c)+t^2(1-\eta) \le 0
\end{equation}

For any given $t$, \eqref{eq:chebychev} defines a quadratic convex relaxation of \eqref{eq:chance-constraint}. The constant $t > 0$ is assumed to be defined by the user (in future work we will explore the possibility of including it in the optimization problem). 

\begin{remark}
  If $\Sigma(x)$ is a \emph{piecewise linear} function of $x$, we can partition the original domain $\cX$ in convex cells such that $\Sigma(x)$ is linear on each element of the partition.
\end{remark}

\subsection{Robust Optimization}\label{sec:robust-optimization}

Consider the following (infinite) set of parametrized constraints:
\begin{equation}\label{QC}
x^{T} A^{T} A x+b^{T} x+c \leq 0\;\forall x, \quad A_xx \le b_x.
\end{equation}

This is a robust optimization problem \cite{c7}, and is equivalent to:
\begin{equation} \label{f14}
\sup\limits_{A_xx \le b_x}(x^{T} A^{T} A x+b^{T} x)\leq -c
\end{equation}
The left side of \eqref{f14} is a non-convex problem. However, because the target function is second order differentiable and convex (quadratic), we have the following:
\begin{proposition}[Vertex trick]\label{prop:vertex-trick}
  The optimal value of \eqref{f14} locates at the vertex of the polytope $A_xx \le b_x$, and condition \eqref{f14} is equivalent to 
  \begin{equation} \label{robust1}
    x_i^{T} A^{T} A x_i+b^{T} x_i \le -c, \quad i=1,2,\dots, n{}\\
  \end{equation}
where $x_1,\dots,x_n$ are all vertexes of the polygon $A_xx \le b_x$
\end{proposition}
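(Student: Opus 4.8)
The plan is to derive \eqref{robust1} from \eqref{f14} using the classical fact that a convex function attains its maximum over a compact convex set at an extreme point. First I would record that the objective $q(x)=x^{T}A^{T}Ax+b^{T}x$ is convex, since its Hessian $2A^{T}A$ is positive semidefinite. Next, consistent with the standing assumption that $\cX$ is a polytope, I would assume the feasible set $P=\{x : A_xx\le b_x\}$ is bounded, hence compact; by the Minkowski--Weyl representation theorem $P$ has finitely many vertices $x_{1},\dots,x_{n}$ and $P=\cvxhull\{x_{1},\dots,x_{n}\}$. (If $P$ were unbounded, the supremum in \eqref{f14} would be $+\infty$ along any recession direction $d$ with $Ad\neq 0$, so one would instead need the extra conditions $Ad=0$ and $b^{T}d\le 0$ on the recession cone; I would flag this but it is not the case of interest here.)

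The core of the argument is then short. Since $q$ is continuous and $P$ compact, the supremum in \eqref{f14} is attained at some $x^{\star}\in P$. Writing $x^{\star}=\sum_{i=1}^{n}\lambda_{i}x_{i}$ with $\lambda_{i}\ge 0$ and $\sum_{i}\lambda_{i}=1$, convexity of $q$ (Jensen's inequality) gives $q(x^{\star})\le \sum_{i}\lambda_{i}q(x_{i})\le \max_{i}q(x_{i})$. Conversely, $x^{\star}$ is a maximizer, so $q(x^{\star})\ge q(x_{i})$ for every $i$; combining the two, $q(x^{\star})=\max_{i}q(x_{i})$, i.e. $\sup_{A_xx\le b_x}q(x)=\max_{i}q(x_{i})$. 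Consequently the single inequality $\sup_{A_xx\le b_x}q(x)\le -c$ is equivalent to $\max_{i}q(x_{i})\le -c$, which is precisely the finite system \eqref{robust1}, completing the reduction of the semi-infinite constraint \eqref{QC}.

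The only real content, and hence the main point to get right, is the compactness hypothesis: the vertex trick genuinely fails for unbounded polyhedra, so the proof must lean on $\cX$ being a polytope (or on the recession-cone condition above). A secondary, purely expository caveat is that $q$ need not be strictly convex, so the maximizer need not be unique nor confined to a vertex; but the statement only asserts that \emph{some} vertex achieves the optimum, which is exactly what the Jensen argument yields, so no further care is needed there. Enumerating the vertices $x_{1},\dots,x_{n}$ is assumed tractable, as is standard when $\cX$ has few facets or low dimension.
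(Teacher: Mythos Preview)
Your proof is correct and is in fact the standard textbook argument, but it differs from the route the paper takes. The paper argues by contradiction using second-order optimality conditions: if the maximum of $q(x)=x^{T}A^{T}Ax+b^{T}x$ were attained at an interior point, the Hessian there would have to be negative (semi)definite, contradicting convexity of $q$; the same reasoning is then repeated after restricting $q$ to the affine hull of any lower-dimensional face. Your argument instead appeals directly to the Minkowski--Weyl representation $P=\cvxhull\{x_1,\dots,x_n\}$ and uses Jensen's inequality to squeeze $q(x^\star)$ between $\max_i q(x_i)$ from above and below. Your approach is cleaner on two counts: it makes the compactness hypothesis explicit (the paper silently assumes $\cX$ is bounded, which you correctly flag as essential), and it sidesteps the delicacy that the second-order necessary condition only gives negative \emph{semi}definiteness, which does not immediately contradict positive semidefiniteness of $2A^{T}A$ when $A$ is rank-deficient. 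The paper's approach, on the other hand, avoids invoking the vertex representation theorem and is perhaps more geometrically vivid. Either way, the conclusion and its use downstream are identical.
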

\begin{proof}
  The target function is convex (the problem is nonconvex). By way of contradiction, suppose that the maximum is achieved at a point $x_{max}$ that is in the interior of the polygon. Since the optimization objective is second order differentiable, its second order derivative (Hessian) must be negative definite in a neighborhood of $x_{max}$. This contradicts the fact that the optimization objective is defined to be convex. A similar argument can be made for $x_{max}$ belonging to a face of the polytope (without it being a vertex), after restricting the objective function to the linear subspace containing that face.
\end{proof}
In other words, the infinite number of constraints \eqref{QC} can be reduced to the $n$ constraints in \eqref{robust1}, where $n$ is the number of vertices of the polytope.

\section{Chance Constraint Robust Control} \label{section3}
As anticipated in Section~\ref{sec:problem_statement}, we consider a linear controller of the form $u=K(x+\theta)$, where $x+\theta$ represents a noisy measurement of the agent's state. As a result, expressions such as the CBF constraint \eqref{ECBF} and the CLF constraint \eqref{CLF} become stochastic quantities. 
In this section, we introduce the probabilistic version of CBF constraint which can make sure the forward-invariant of the set in the sense of probability. We present a general method to solve the optimization problem with chance CBF constraint. To demonstrate this approach, we show how to design a linear state feedback controller.

\subsection{Set Invariance and Chance Constraint} \label{section3a}
The most natural way to transform \eqref{ECBF} into a deterministic quantity is to reformulate it using chance constraints:
\begin{equation} \label{CCBF} 
  \cP\bigl( L_{f}^{r}h(x)+L_{g} L_{f}^{r-1} h(x) u+\alpha_h \xi_{h}(x) \ge 0 \bigr) \ge 1-\eta(x)
\end{equation}
where, as before, $\eta(x)\in[0,1]$. We refer to \eqref{CCBF} as a \emph{Chance CBF (CCBF) constraint}. 

\begin{proposition}\label{prop:forward-invariance}
Assume that $\eta(x) = 0$  for all $x \in \partial C$ and that there exist $K$ such that \eqref{CCBF} is satisfied for all $x\in C$ under the dynamics \eqref{LTI} and the controller \eqref{eq:controller}. Then, $\cC$ is forward invariant almost surely (i.e., if $x(0)\in\cC$, then $x(t)\in\cC$ for all $t$ and all the realizations of $\theta$).
\end{proposition}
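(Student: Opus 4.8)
The plan is to reduce the almost-sure statement to a \emph{pathwise} application of the deterministic forward-invariance property of (E)CBFs recalled after Definition~\ref{def:HOCBF}, using the hypothesis $\eta(x)=0$ on $\partial C$ to kill the randomness exactly where it matters.

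First I would restrict the CCBF constraint \eqref{CCBF} to a boundary point $x\in\partial C$. With $u=K(x+\theta)$ the argument of the probability is affine in the noise, say $a(x)+b(x)\transpose\theta\ge 0$ with $a(x)=L_f^{r}h(x)+L_gL_f^{r-1}h(x)Kx+\alpha_h\xi_h(x)$ and $b(x)\transpose=L_gL_f^{r-1}h(x)K$. Since $\eta(x)=0$ there, \eqref{CCBF} forces $\cP\bigl(a(x)+b(x)\transpose\theta\ge 0\bigr)=1$, i.e.\ $a(x)+b(x)\transpose\theta\ge 0$ almost surely; taking expectations and using $\expect[\theta]=0$ also yields the purely deterministic inequality $a(x)\ge 0$. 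In words: the ordinary ECBF inequality \eqref{ECBF}, evaluated at the realized control $u=K(x+\theta)$, holds almost surely at every point of $\partial C$.

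Second I would close the argument pathwise. Fix a realization of the noise process for which the above holds along the trajectory; the closed loop is then the deterministic affine system $\dot x=(A+BK)x+BK\theta(t)$. If $x(0)\in C$ but the trajectory ever left $C$, let $t^{\ast}=\inf\{t\ge 0: x(t)\notin C\}$; by continuity $x(t^{\ast})\in\partial C$, and on a right neighborhood of $t^{\ast}$ the function $h$ (through the chain $\psi_0,\dots,\psi_{r-1}$) would have to dip below zero, contradicting the differential inequality encoded by \eqref{ECBF} at $x(t^{\ast})$ — this is precisely the Nagumo/comparison-lemma step used to prove the deterministic property in \cite{c2,c3}. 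Hence for every such realization the trajectory remains in $C$, and since the set of such realizations has probability one, $C$ is forward invariant almost surely.

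The delicate point — and the one I expect to be the real obstacle — is the passage from ``for each fixed $x\in\partial C$ the bad event is null'' to ``almost surely the inequality holds at every boundary point the trajectory visits'', since an uncountable union of null sets need not be null. I would handle it by noting that only the (random) exit point $x(t^{\ast})$ matters, so it is enough that the inequality hold a.s.\ along the trajectory, and by exploiting structure: $a(\cdot)$ and $b(\cdot)$ are continuous (indeed affine, as $h$ is affine and the dynamics linear), so the exceptional subset of $\partial C$ is pinned down by countably many conditions (e.g.\ over a dense set of points/directions) and stays null; if moreover $\theta$ has bounded support, the bound $b(x)\transpose\theta\ge -a(x)$ can be made uniform on compact subsets of $\partial C$. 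Two smaller caveats I would state explicitly: some regularity of $t\mapsto\theta(t)$ is needed for \eqref{LTI} to admit well-defined (e.g.\ absolutely continuous) solutions, and when the relative degree $r>1$ the object that is genuinely rendered invariant is the nested intersection $\hat C=\bigcap_i C_i$, so the hypothesis should be read as $\eta=0$ on $\partial\hat C$.
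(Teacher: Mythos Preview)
Your proposal is correct and follows essentially the same route as the paper: argue by contradiction at a boundary point, use $\eta(x)=0$ there to turn the chance CBF inequality into an almost-sure (hence pathwise) deterministic ECBF inequality, and then invoke the standard forward-invariance result from \cite{c2,c3}. Your treatment is in fact more careful than the paper's terse proof --- you explicitly flag the uncountable-union-of-null-sets issue, the regularity needed on $t\mapsto\theta(t)$, and the fact that for $r>1$ the genuinely invariant object is $\hat C=\bigcap_i C_i$ rather than $C$ --- all points the paper glosses over.
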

\begin{proof}
Let $r$ be the relative degree of \eqref{LTI} with respect to $h$. This means that the only term in \eqref{eq:psi} containing the control input $u$ is $\psi_{r}$. Based on \cite[Theorem 4]{c3}, it is sufficient to show that $\psi_{r-1} > 0$ almost surely, i.e., that the set $C_r$ is forward invariant with probability one. By way of contradiction, if $C_r$ is not forward invariant, then $\exists x \in \partial C_r$ such that $\dot \psi_{r-1} < 0$ for some realization of $\theta$ having non-zero probability. However, from \eqref{CCBF}, for $\forall x \in \partial C_r$, we have $\cP(\dot \psi_{r-1}\ge 0) = 1$, hence $\cP(\dot \psi_{r-1} < 0)=0$, leading to a contradiction.
\end{proof}
\begin{remark}\label{remark:forward-invariance}
  Note that the conditions for almost sure forward invariance in Proposition~\ref{prop:forward-invariance} imply a close relation between $\Sigma(x)$ and $\eta(x)$. In particular, from the proof above, the condition $\cP(\dot\psi_{r-1} > 0)=1$ implies strong constraints on the possible realization of $\theta$, and therefore on the practical sensors used by the robot. For instance, intuitively, when $x$ is in correspondence of a wall, the measurements of distances from the wall should be non-negative; in practice this could be implemented with additional sensors (e.g., short-range proximity sensors to complement long-range Lidar).
\end{remark}
The probability $\eta(x)$ is a function of $x$. For the convenience of solving the optimization problem, the $\eta(x)$ should have the following properties:
\begin{itemize}
\item $\eta(x)$ is concave;
\item $\eta(x) = 0$ for $x \in \partial C$;
\item $0<\eta(x)<1$ for $x \in C$.
\end{itemize}

In this paper we use
\begin{equation} \label{eta}
\eta(x) = \log \left(\frac{d(x)}{\gamma}+1 \right)
\end{equation}
where $d(x)$ is the distance from $x$ to the boundary (see Appendix-\ref{apdx1}), $\gamma$ is number that depends on the size of the environment $\max_{x\in\cX} d(x)$ such that $\eta(x)\in[0,1]$ for $x \in C$.

Similarly to the CCBF constraints, we introduce the \emph{Chance CLF (CCLF) constraints} based on the CLF constraint \eqref{CLF}
\begin{equation} \label{CCLF}
\cP(L_{f}^{r} V(x)+L_{g} L_{f}^{r-1} V(x) u+\beta_{V} \xi_{V}(x) \leq 0) \ge 1 - \eta_v
\end{equation}
and the \emph{Chance Actuator (CACT) constraint}
\begin{equation} \label{CACT}
\cP(u \in \cU) \ge 1 - \eta_u
\end{equation}
where $\eta_c,\eta_u\in [0,1]$ are small constant probabilities.

\subsection{Controller with Chance Constraints} \label{section3b}
In this section we address the control objectives in Problem~\ref{def:problem}. This mainly consists of defining an appropriate Control Lyapunov Function and formulating the optimization programs for finding the matrix $K$ for the controller \eqref{eq:controller}.


\begin{figure}[bt]
  \centering
  \includegraphics[scale=0.25]{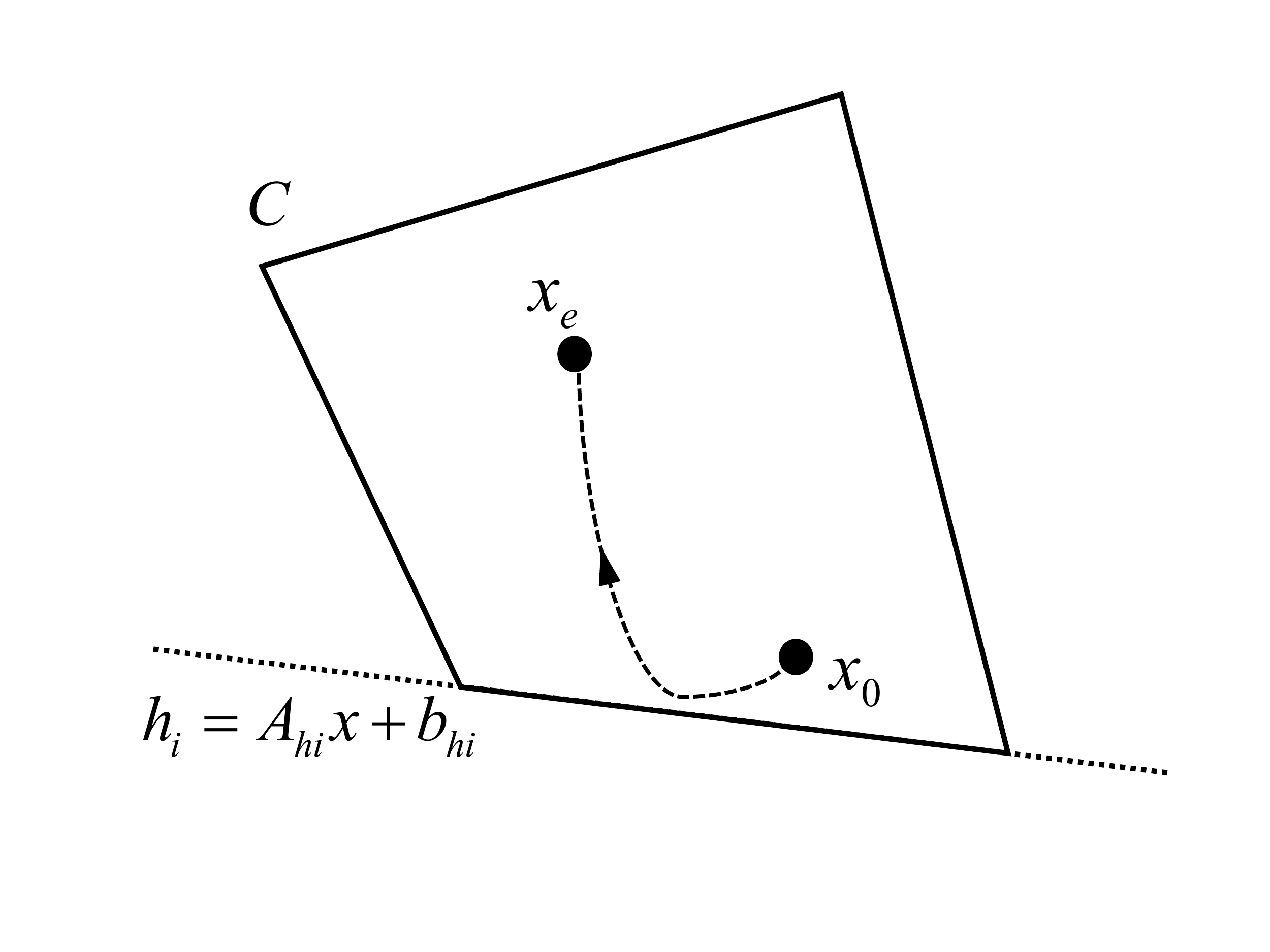}
  \caption{The system running in the convex cell $C$ without touching the boundary. $x_0$ is the initial point. $x_e$ is the end point.}\label{fig:1}
\end{figure}

\subsubsection{Equilibrium Control \ref{it:equilibrium}}
Without loss of generality, we assume that the desired equilibrium is at the origin $x_e=0$ (if not, we can perform a simple change of coordinates; note that the dynamics \eqref{LTI} does not change).
Our goal is to obtain a controller matrix $K$ such that the trajectories of the system converge to a neighborhood of $x_e$ (with exact convergence to $x_e$ when $\theta=0$); see Fig.~\ref{fig:1} for an illustration of this problem.

We select a Lyapunov function candidate of the form
\begin{equation}
V(x) = x\transpose Px,
\end{equation}
where $P$ is a positive definite matrix to be designed.
Following the definitions of Section~\ref{section2}, the necessary Lie derivatives can be computed as
\begin{align} \label{env1-setup}
L^r_{f}h_i(x)&=A_{xi}A^rx,\\
{L}_g{L}^{r-1}_{f}h_i(x)&=A_{xi}A^{r-1}B,\\
{L}_{f}V(x)&=x^T(A^TP+PA)x,\\
{L}_gV(x)&=2x^TPB.
\end{align}

Combining the CCBF \eqref{CCBF}, CCLF \eqref{CCLF}, and CACT \eqref{CACT} constraints with the dynamics \eqref{LTI}, we propose to find $K$ via the following optimization problem
\begin{align}
&\underset{K,Q}{\operatorname{min}} \quad \norm{K}^2_F   \label{q1} \\
&s.t. \notag \\
&\left\{
\begin{array}{l@{}l@{}l@{}l}
& \cP\{\mathcal{A}_ix + A_{xi}A^{r-1}BK(x+\theta) +\alpha_{hi,0}b_{hi}\ge 0 \}\\
& \qquad \qquad \qquad \qquad \qquad \qquad \qquad \qquad \ge 1-\eta_{hi}(x)\\
&\cP\{-x^TQx + 2x^TP BK\theta \le 0 \}\ge 1-\eta_v\\
& \cP\{A_{uj}K(x+\theta)\le b_{uj} \}\ge 1-\eta_{uj}\\
&-Q = (A+BK)^TP+P(A+BK)+\beta_V P\\
& Q\succ 0\\
& i=1,\dots,n_h\\
& j=1,\dots,n_u\\
& \forall x: A_xx\le b_x
\end{array} \right. \notag
\end{align}
where $\mathcal{A}_i = A_{xi}A^r + \sum^{r-1}_{m=0}\alpha_{hi,m}A_{xi}A^m$,
$\alpha_{hi,m}$ is the $(m+1)^{th}$ entry of the vector $\alpha_{hi}$, $A_x = -A_h$ and $b_x = b_h$.

\begin{remark}
If we included $P$ together with $K$ and $Q$ as variables in the optimization problem \eqref{q1}, we would have obtained bilinear terms in $P$ and $K$. Instead, we propose to find $P$ for the Lyapunov function by solving a separate optimization problem (see the Appendix \ref{apdx2}).
\end{remark}
\begin{remark}
  The objective function in \eqref{q1} can be selected to be any linear or quadratic function of the optimization variables. We selected $\norm{K}_F^2$ to encourage ``small effort'' controller that tend to produce inputs $u$ with small norm.
\end{remark}

\subsubsection{Path Control \ref{it:path}} Our goal is to obtain a controller matrix $K$ such that the trajectories of the system reach a predetermined exit face of the polyhedron $\cX$ when starting from any point in $\cX$; see \ref{fig:2} for an illustration. Note that when defining the CBF functions $h_i(x)=A_{hi}x+b_{hi}$ for obstacle avoidance (as defined in Section~\ref{sec:CBFintro}), we omit the exit face. 

\begin{figure}[b]
  \centering
  \includegraphics[scale=0.25]{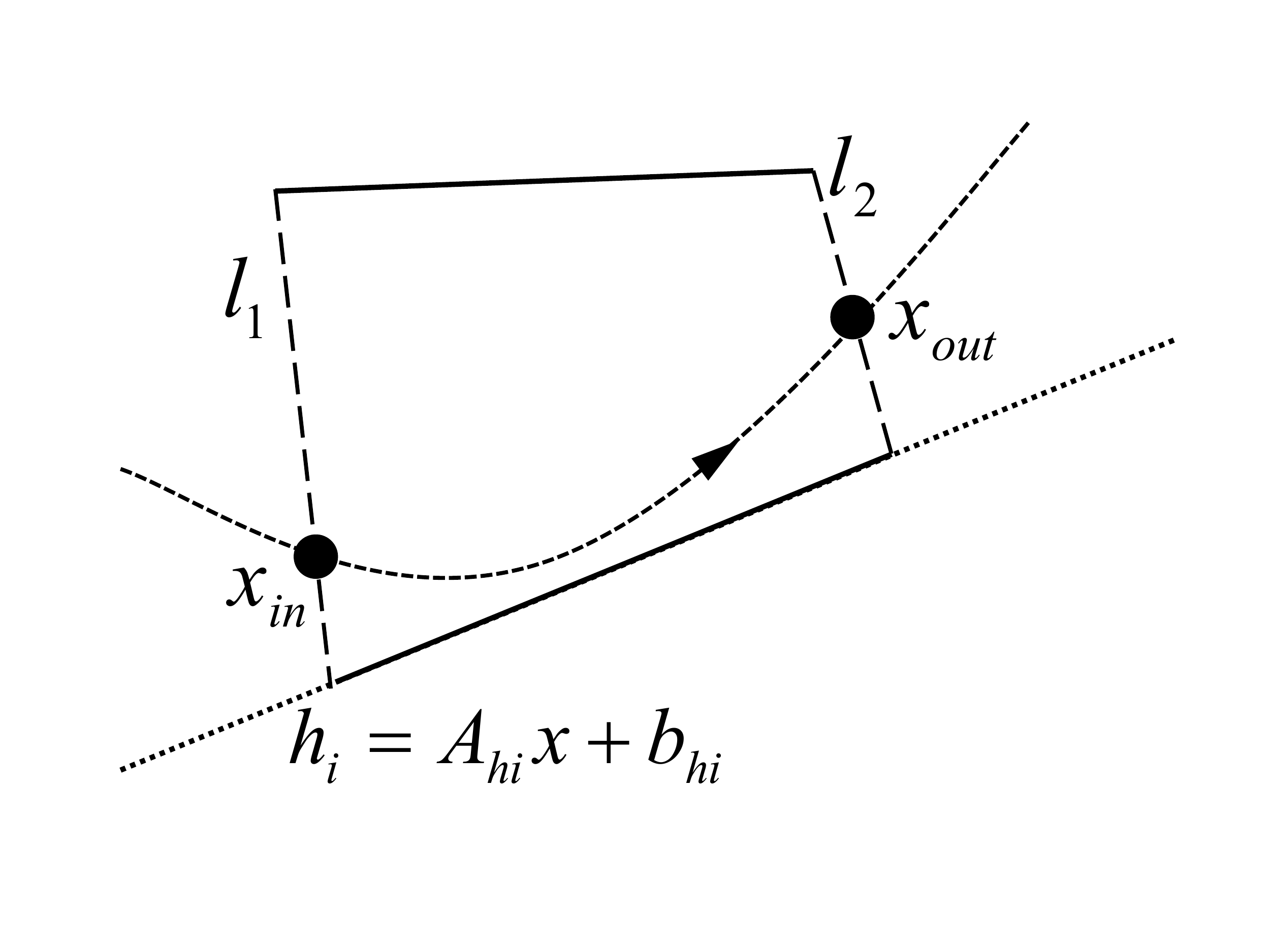}
  \caption{Path Control Task: The system starts from a point $x_{in}$ in the polygon (possibly on a facet $l_1$) and leaves from a point $x_{out}$ on the exit facet $l_2$ without touching the boundary.}\label{fig:2}
\end{figure}

We select a Lyapunov function candidate for the exit face is
\begin{equation}
    V(x) = z^Tx+b_z
\end{equation}
where $z$ andn $b_z$ are defined such that $V(x)=0$ for all $x$ in the exit face (i.e., $z$ and $b_z$ represent the normal and distance of the hyperplane containing the exit face), and $V(x)>0$ inside the polygon.

Following the definitions of Section~\ref{section2}, the necessary Lie derivatives can be computed as
\begin{align} \label{env2-setup}
L^r_{f}h_i(x)&=A_{xi}A^rx,\\
{L}_g{L}^{r-1}_{f}h_i(x)&=A_{xi}A^{r-1}B,\\
{L}^r_{f}V(x)&=z^TA^rx,\\
{L}_g{L}^{r-1}_{f}V(x)&=z^TA^{r-1}B.
\end{align}

Combining the CCBF \eqref{CCBF}, CCLF \eqref{CCLF}, and CACT \eqref{CACT} constraints with the dynamics \eqref{LTI}, we propose to find $K$ via the following optimization problem.
\begin{align}
&\underset{K}{\operatorname{min}} \quad \norm{K}_F^2   \label{q2} \\
&s.t. \notag\\
&\left\{
\begin{array}{l@{}l@{}l@{}l}
& \cP\{\mathcal{A}_ix + A_{xi}A^{r-1}BK(x +\theta) +\alpha_{hi,0}b_{hi} \ge 0 \}\\
& \qquad\qquad\qquad\qquad\qquad\qquad\qquad\qquad \ge 1-\eta_{hi}(x)\\
&\cP\{\mathcal{A}_zx + z^TA^{r-1}BK(x  +\theta) +\beta_{V,0}b_{z}\le 0 \}\ge 1-\eta_v\\
& \cP\{A_{uj}K(x+\theta)\le b_{uj} \}\ge 1-\eta_{uj}\\
& i=1,\dots,n_h-2\\
& j=1,\dots,n_u\\
&\forall x: A_xx \leq b_x
\end{array} \right. \notag
\end{align}
where $\mathcal{A}_z = z^TA^r+\sum^{r-1}_{m=0}\beta_{V,m}z^TA^m$ and $\beta_{V,m}$ is the $(m+1)^{th}$ entry in the vector $\beta_V$.

\subsection{Convex Approximation and Robust Optimization} \label{section4}
In general, both optimization problems \eqref{q1} and \eqref{q2} are nonconvex, and contain an infinite number of constraints due to the expression $\forall x:A_xx\leq b$. We propose to find convex relaxations of these problems by solving two steps:
\begin{enumerate}
\item Apply the convex relaxation of the chance constraints based on the Chebyshev bound (Section~\ref{sec:chance-relaxation}).
\item Reduce the infinite number of constraints to a finite set given by the vertices of the polyhedron $\cX$ (Section~\ref{sec:robust-optimization}).
\end{enumerate}



Let us consider the chance CBF constraint (\ref{q2}) as an example. For the sake of simplicity, let $K_i = A_{xi}A^{r-1}BK$, $K'_i= \mathcal{A}_i + K_i$ and $\alpha_{i} = \alpha_{hi,0}b_{hi}$. Following the derivation of \eqref{eq:chebychev}, we obtain:

\begin{equation} \notag
\begin{aligned}
&P\{\mathcal{A}_ix + A_{xi}A^{r-1}BK(x+\theta) +\alpha_{hi,0}b_{hi} \ge 0 \} \\ 
&\qquad\qquad\qquad\qquad\qquad\qquad\qquad\qquad\qquad  \ge 1-\eta_{hi}(x)\\
& \Longleftrightarrow P\{K_i^{\prime}x + K_i\theta +\alpha_{i} \ge 0 \} \ge 1-\eta_{hi}(x)\\
& \Longleftarrow x^TK_{i}^{\prime T}K_{i}^{\prime} x+2(\alpha _i-t_i)K_{i}^{\prime}x - t_i^2\eta_{hi}(x)\\
&\qquad \qquad \qquad \qquad \qquad \qquad +K_i\Sigma K_i^T + (\alpha_{i}-t_i)^2  \le 0
\end{aligned}
\end{equation}

Since the inequality needs to be satisfied for all $x\in\cX$, we can equivalently write 
\begin{equation} \notag
\begin{aligned}
&\sup\limits_{A_xx \le b_x} \left\{ x^TK_{i}^{\prime T}K_{i}^{\prime} x+2(\alpha _i-t_i)K_{i}^{\prime}x - t_i^2\eta_{hi}(x) \right\}\\
&\qquad \qquad \qquad \qquad \qquad \qquad +K_i\Sigma K_i^T + (\alpha_{i}-t_i)^2  \le 0
\end{aligned}
\end{equation}

Applying Proposition~\ref{prop:vertex-trick}, we can use vertices to get the upper bound of the supremum.

\begin{equation} \notag
\begin{aligned}
\max _{x_k\in \{x_1,x_2,\dots,x_{n_p}\}} \left\{
\begin{matrix}
 x_k^{T} K_{i}^{\prime T} K_{i}^{\prime} x_k+2\left(\alpha_{i}-t_{i}\right) K_{i}^{\prime} x_k\\
-t_i^{2}\eta_{h i}(x_k)
\end{matrix}
\right\} \\+K_i\Sigma K_i^T + (\alpha_{i}-t_i)^2  \le 0\\
\Longleftrightarrow  x_k^{T} K_{i}^{\prime T} K_{i}^{\prime} x_k+2\left(\alpha_{i}-t_{i}\right) K_{i}^{\prime} x_k
-t_i^{2}\eta_{h i}(x_k)\\
+K_i\Sigma K_i^T + (\alpha_{i}-t_i)^2  \le 0\\
\end{aligned}
\end{equation}
for $k = 1,2,\dots, n_p$, where $n_p$ is the number of vertices, $x_k$ is the $k^{th}$ vertex of the polygon.

Hence, the original CBF constraint for $h_i$ becomes
\begin{equation} \label{ccbf}
\begin{matrix}
 i^{th}\\\text{CBF}
\end{matrix}
\left\{
\begin{array}{l@{}l@{}l@{}l}
&  K_{i,k}^{\prime T} K'_{i,k}+2\left(\alpha_{i}-t_{i}\right) K_{i}^{\prime} x_k
-t_i^{2}\eta_{h i}(x_k) \\
&\qquad\qquad\qquad\qquad+K_i\Sigma K_i^T + (\alpha_{i}-t_i)^2  \le 0\\
& K_i =A_{xi}A^{r-1}BK\\
& K'_i=\mathcal{A}_i + K_i\\
& K'_{i,k} = K'_ix_k\\
& k=1,\dots,n_p
\end{array}
\right.
\end{equation}
where $\alpha_{i} = \alpha_{hi,0}b_{hi}$ and $t_i$ is the parameter for convex approximation (\ref{chance3}). This is a quadratic constraint.

Similarly, for the chance CLF constraint in (\ref{q1}), we get
\begin{equation} \label{cclf}
\text{CLF}
\left\{\begin{array}{l@{}l@{}l@{}l}
&  Q_k^Tx_kx_k^TQ_k + 4K_{v,k}^T\Sigma K_{v,k}+t_v^2(1-\eta_v) \le0\\
&-Q = (A+BK)^TP+P(A+BK)+\beta_V P\\
& K_v=PBK\\
& Q_k = Qx_k \\
& K_{v,k} = K^T_vx_k\\
& Q \succ 0 \\
& k=1,\dots,n_p
\end{array}
\right.
\end{equation}
where $t_v$ is the parameter for convex approximation. 

For the chance CLF constraint in (\ref{q2}), we get
\begin{equation} \label{cclf2}
\text{CLF}
\left\{
\begin{array}{l@{}l@{}l@{}l}
&  K_{z,k}^{\prime T} K'_{z,k}+2\left(\beta_{z}+t_{z}\right) K_{z}^{\prime} x_k +K_z\Sigma K_z^T\\
&\qquad\qquad\qquad\quad + (\beta_{z}+t_z)^2 -t_z^{2}\eta_{v}  \le 0\\
& K_z =z^TA^{r-1}BK\\
& K'_z=\mathcal{A}_z + K_z\\
& K'_{z,k} = K'_zx_k\\
& k=1,\dots,n_p
\end{array}
\right.
\end{equation}
where $\beta_{z} = \beta_{z,0}b_{z}$ and $t_z$ is the parameter for convex approximation.

For the chance actuator constraint in (\ref{q1}) and (\ref{q2}), we get
\begin{equation} \label{cact}
\begin{matrix}
 j^{th}\\\text{ACT}
\end{matrix}
\left\{
\begin{array}{l@{}l@{}l@{}l}
&  K_{u,k}^TA^T_{uj}A_{uj}K_{u,k}+2(t_{uj}-b_{uj})A_{uj}K_{u,k}\\
& \qquad +K_{uj}\Sigma K_{uj}^T+(t_{uj}-b_{uj})^2 - t_{uj}^2\eta_{uj}\le 0\\
& K_{u,k} = Kx_k\\
& K_{uj} = A_{uj}K\\
& k=1,\dots,n_p
\end{array}
\right.
\end{equation}

Finally, a feasible (but conservative) solution for (\ref{q1}) and (\ref{q2}) can be found by solving the following optimization problem

\begin{align} \label{result}
&\underset{K}{\operatorname{min}} \quad \norm{K}_F^2 \\
&s.t. \notag \\
&\left\{
\begin{array}{l@{}l@{}l@{}l}
& \text{$i^{th}$ CBF constraint (\ref{ccbf})}\\
& \text{CLF constraint (\ref{cclf}) or (\ref{cclf2})} \\
& \text{$j^{th}$ Actuator  constraint (\ref{cact})}\\
& i=1,\dots,n_h\\
& j=1,\dots,n_u\\
\end{array} \right. \notag
\end{align}
where $K$ is the feedback matrix for the controller. All other variables having capital $K$ are intermediate decision variables.  

This optimization problem contains quadratic inequality constraints and linear equality constraints. It is therefore a Quadratic Constraint Quadratic programming (QCQP).

\begin{figure*}[t] 
  \centering
  \subfloat[$\sigma = 0$]{
    \includegraphics[width=2in]{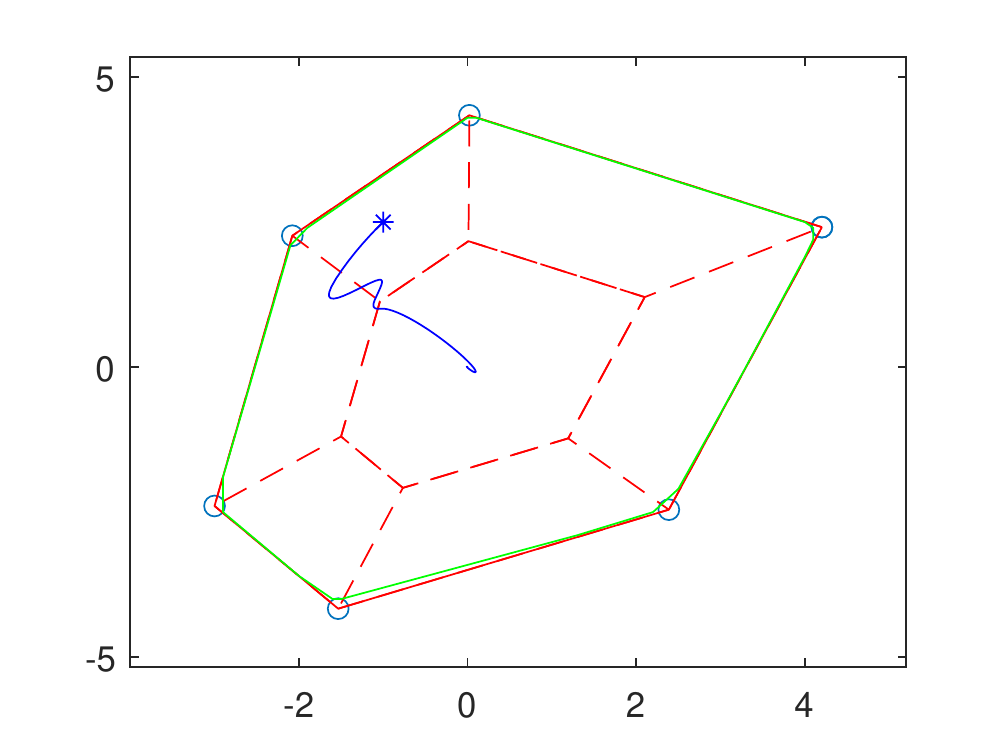}
  }%
  \subfloat[$\sigma = 0.1$]{
    \includegraphics[width=2in]{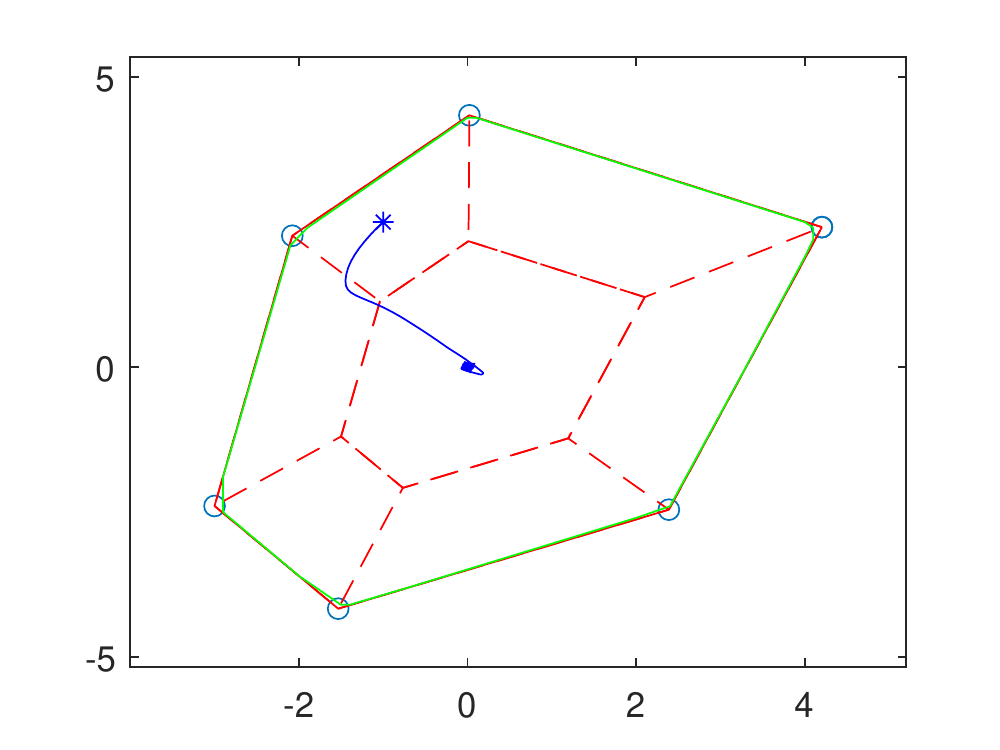}
  }%
  \subfloat[$\sigma = 1$]{
    \includegraphics[width=2                                  in]{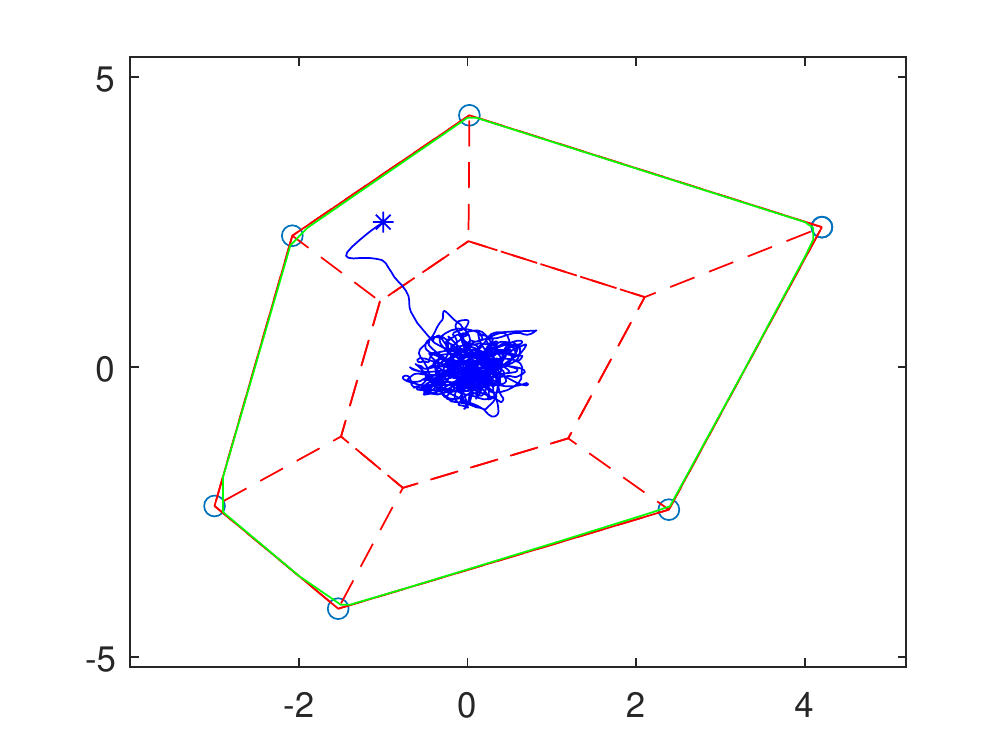}
  }%
  \caption{Equilibrium Control: Running path and invariant set for two-dimension second order integrator with different noise}\label{fig:3}
\end{figure*}
\begin{figure*}[t] 
  \centering
  \subfloat[$\sigma = 0$]{
    \includegraphics[width=2in]{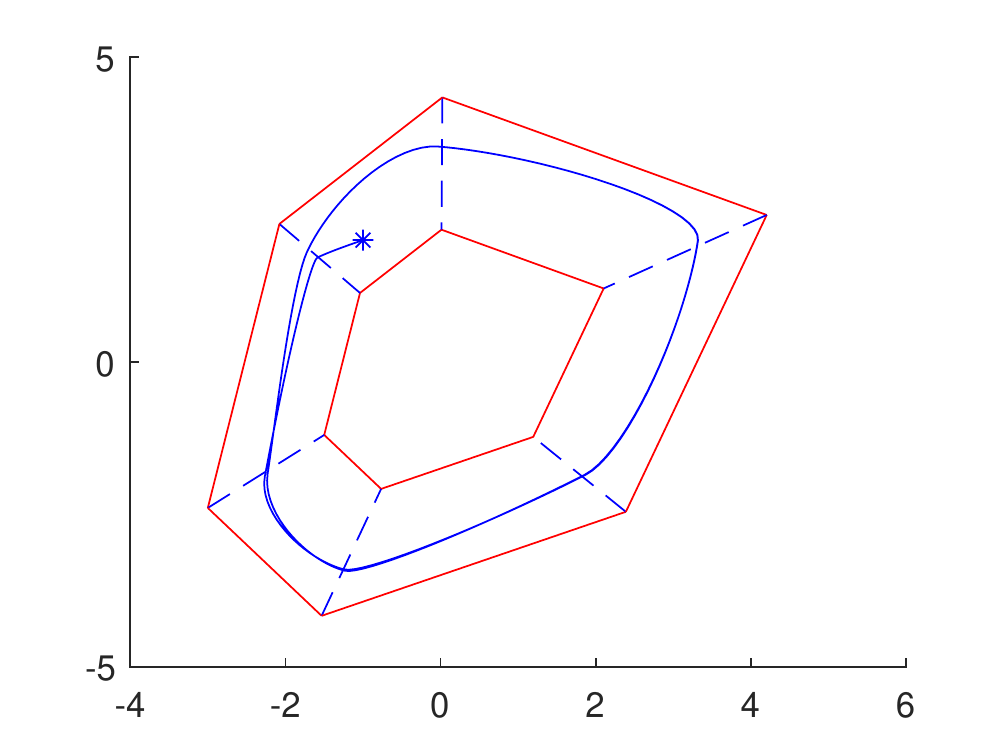}
  }%
  \subfloat[$\sigma = 0.1$]{
    \includegraphics[width=2in]{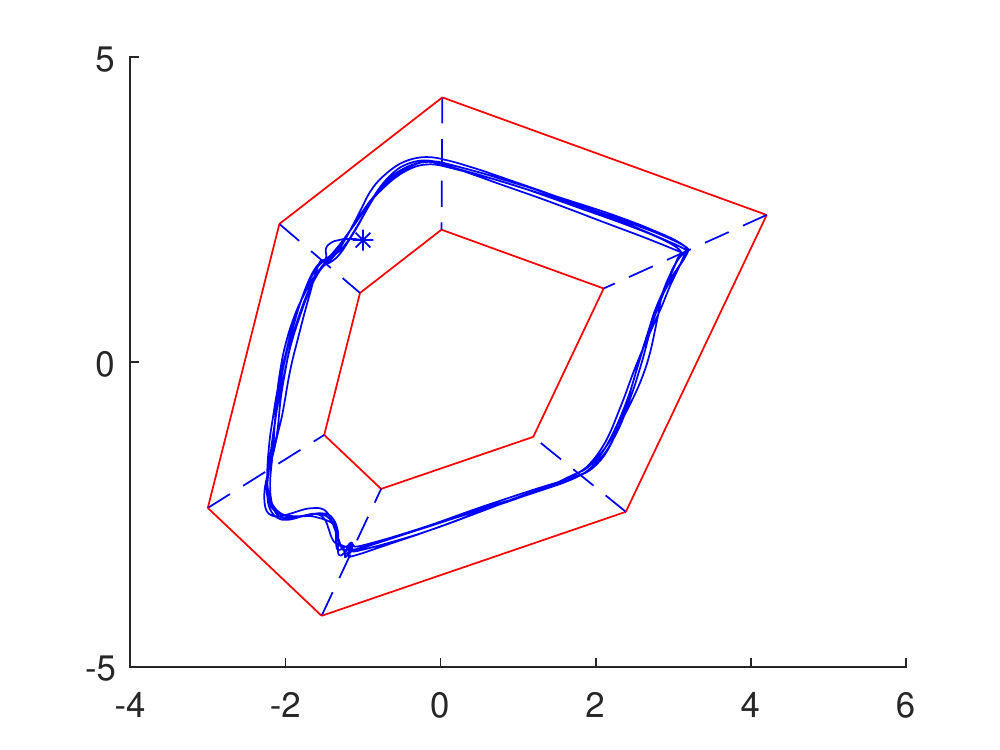}
  }%
  \subfloat[$\sigma = 1$]{
    \includegraphics[width=2in]{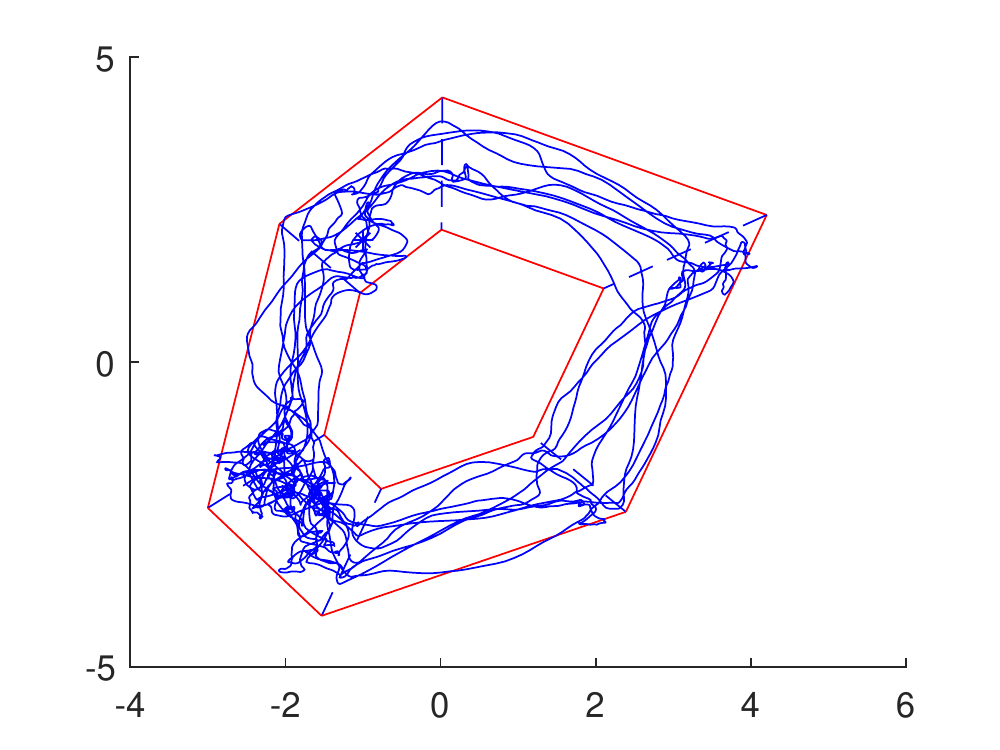}
  }%
  \caption{Path control: Running path for two-dimension second order integrator with different noise}\label{fig:4}
\end{figure*}

\section{Numerical Experiment } \label{section5}
In this section, we do simulations to show the effectiveness of the controller given by (\ref{result}). In order to show the robustness of our approach to non-ideal sensors, we consider a simple case where the covariance matrix of the noise is constant (i.e., it does not depend on $x$). This means that the conditions discussed in Remark~\ref{remark:forward-invariance} are not satisfied. As such, it is impossible to find a feasible solution satisfying all constraints in (\ref{result}) strictly; in the simulations below we show that we can modify \eqref{result} by introducing slack variables to make the problem feasible, while still obtaining a controller with good empirical behavior.

In this section, we use the second-order integrator for the dynamics \eqref{LTI}, but we place restrictions only on the positions. More precisely, selecting the system variable as $[x, \dot{x}, y, \dot{y}]\transpose$, the barrier function (\ref{BF}) has the following form
\begin{equation} 
  h_i(x) = \bmat{ A_{hi1}& 0 & A_{hi2} & 0}
  \bmat{ x\\ \dot{x} \\ y\\ \dot{y}} + b_{hi}
\end{equation}

The following two simulations share the same parameter setting. All parameters mentioned in previous section including $t, \alpha_{hi,0}$, and $\beta $ are 1. And the failure probability for chance CLF constraint $\eta_v$ is 0.2.

\subsection{Equilibrium Control, Relative degree $r=2$}
Figure \ref{fig:3} gives the simulation result for a two-dimension second-order integrator under different noise. In the figure, the blue curve is the running trace with initial state $x_0 = [-1.2, 0.3, 2.5, 0.5]^T$. The red polygon is the environment boundary. The green polygon is the calculated invariant set with zero initial velocity. The results show that the invariant set almost identifies with the polygon environment except for some corners of the polygon. This means that the system can converge to the equilibrium point without violating the boundary.

\subsection{Path Control, Relative degree $r=2$}
Figure \ref{fig:4} gives the simulation result of the path control task for a two-dimension second-order integrator under different noise. In the figure, the blue curve is the system's running trace. The red polygon is the environment boundary. Two polygons setup a ring environment for the path control task. 

The result shows that the controller works very well in the environment with zero noise or small noise, with the system running inside the given zone and follow the path defined by the CLF. In the environment with strong noise, the controller also works but not perfectly, with occasional violations of the boundary. This is due to the presence of slack variables to make the problem always feasible.

\section{Conclusions} \label{section6}
In this paper, we introduce the chance CBF and prove that system satisfying chance CBF constraint would be forward invariant in the sense of probability. Based on the chance CBF constraint, we proposed a novel approach to find the controller in the given polygon environment with noise. The controller design problem is set up as a chance constraint optimization which is nonconvex. We use convex approximation and the vertex trick to constrain the upper bound of the probability instead of constraining the probability. This leads to a quadratic constraint quadratic programming (QCQP) and allows as to find a feasible solution for the original chance constraint optimization problem. The solution of this QCQP gives a linear feedback controller which is easy to implement.

We validate our approach in two tasks, equilibrium control, and path control. In the equilibrium control task, the system needs to converge to the equilibrium point without violating the given boundary. In the path control task, the system works in the given environment following the path defined by the CLF without violating the boundary. The controller works very well in the equilibrium control task with zero noise, small noise, and large noise. It also works well in the path control task with zero noise and small noise. These results show that the controller is robust to the input.

In future work, we will pursue a better theoretical characterization of the CLF stability guarantees (mirroring what is done in this paper for the CBF safety guarantees), and a more rigorous formulation of relaxations.



\section*{APPENDIX} \label{appendix}
\subsection{Distance to boundary} \label{apdx1}
\begin{figure}[b]
  \centering
  \includegraphics[scale=0.2]{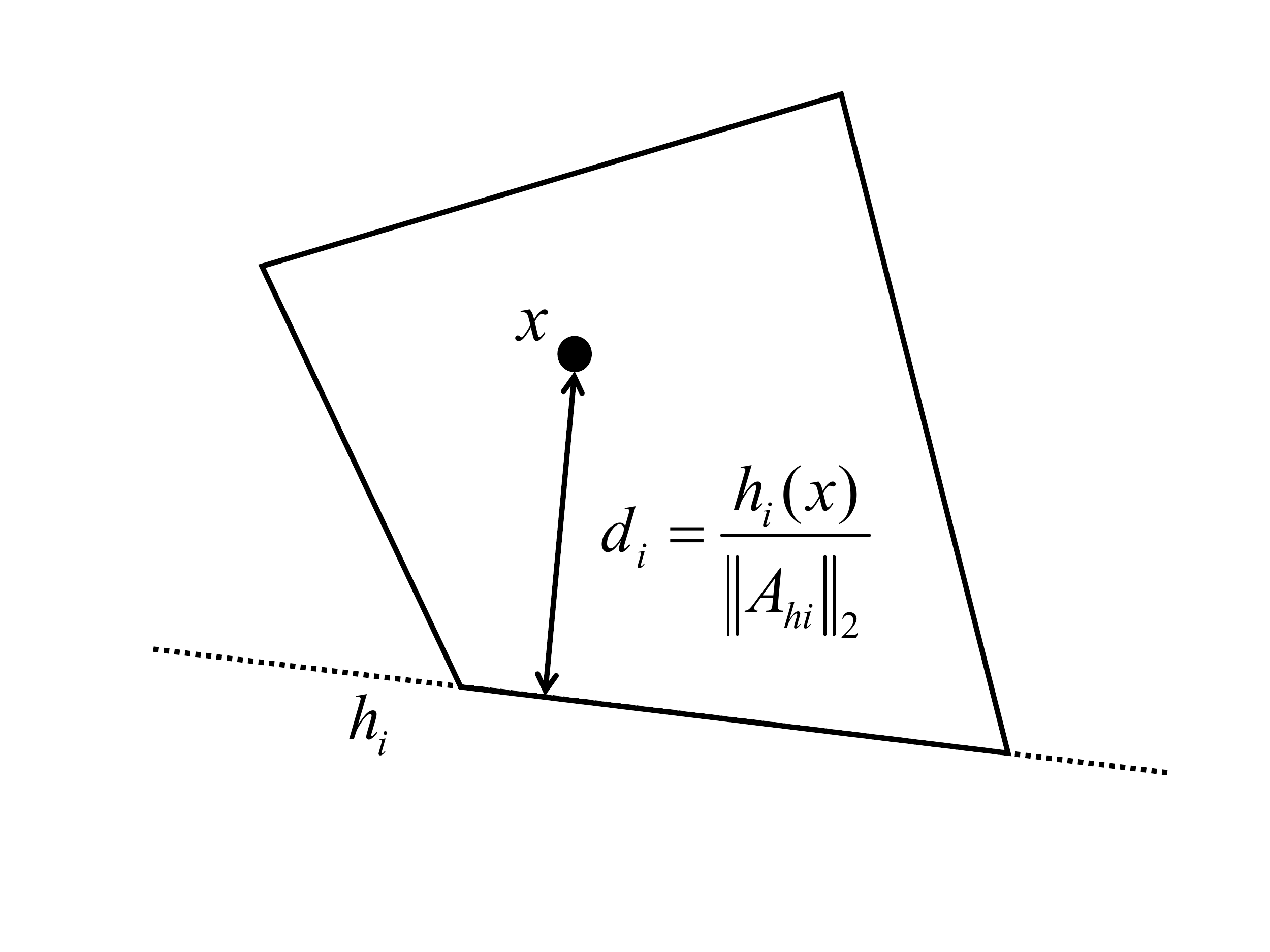}\label{fig:apdx1}
  \caption{$d_i(x)$ is the distance from $x$ to the $i^{th}$ boundary in the polygon}
\end{figure}

\begin{equation}
\begin{aligned} 
d_i &= \frac{h_i(x)}{\Vert A_{xi} \Vert_2} =\frac{A_{xi}}{\Vert A_{xi} \Vert_2}x + \frac{b_{hi}}{\Vert A_{xi} \Vert_2}\\
d   &= A_h\Lambda x + \Lambda b_h\\
\end{aligned}
\end{equation}
where $\Lambda = diag\{\frac{1}{\Vert A_{xi} \Vert_2}\}$ is a diagonal matrix.
The metric $d(x)$ has several properties:
\begin{itemize}
\item $d_i(x) > 0$ for $x\in C$;
\item $d_i(x) = 0$ for $x\in \partial C$;
\item $d_i(x)$ is affine.
\end{itemize}

Because of these properties, $d(x)$ does not affect the concavity properties of (\ref{eta}).

\subsection{Searching for $P$} \label{apdx2}
$P$ is the positive definite matrix for Lyapunov function $V(x) = x^TPx$. To simplify the optimization problem (\ref{q1}), we search for $P$ by solving the following optimization problem.

\begin{align}
&\underset{}{\operatorname{find}} \quad P,K    \\
&s.t. \notag \\
&\left\{
\begin{array}{l@{}l@{}l@{}l}
&(A+BK)^TP+P(A+BK)+\beta_V P \prec  0\\
& P \succ 0\\
\end{array} \right. \notag
\end{align}

This is no LMI, to solve this problem, do the variable substitution $Q = P^{-1}$ and $Y=KP^{-1}=KQ$.

\begin{align}
&\underset{Q,Y,\Lambda}{\operatorname{min}} \quad -\lambda^T \Lambda  \\
&s.t. \notag \\
&\left\{
\begin{array}{l@{}l@{}l@{}l}
&QA^T+ AQ +\beta _VQ+ Y^TB^T+BY + diag(\Lambda)  \prec  0\\
& Q \succ 0\\
& \lambda_i \ge 0\\
\end{array} \right. \notag
\end{align}
where $\lambda \in \mathbb{R}^n$ is the weight for different $\Lambda_i$, $\Lambda$ is a diagonal matrix, and we changed the feasibility problem into a minimization problem. After finding $Q$, we set $P = Q^{-1}$; the solution for $K$ is discarded.

\end{document}